\documentclass[12pt]{amsproc}
\textwidth=145mm
\usepackage{amssymb}

\newtheorem*{theorem}{Theorem}
\newtheorem{corollary}{Corollary}
\newtheorem{lemma}{Lemma}

\begin{document}

\author[V.~Lebedev]{by\\ \textsc{Vladimir Lebedev} (Moscow)}
\address{National Research University Higher School of Economics,
34~Tallinskaya Str., Moscow 123458, Russia}

\email{lebedevhome@gmail.com}

\title[Quantitative aspects of the Beurling--Helson
theorem]{Quantitative aspects of the Beurling--Helson theorem:
Phase functions of a special form}

\subjclass[2010]{Primary 42B35; Secondary 42B05, 42A20}

\keywords{Absolutely convergent Fourier series, superposition
operators, Beurling--Helson theorem.}

\date{2/NOVEMBER/2016}

\begin{abstract}
We consider the space $A(\mathbb{T}^d)$ of absolutely
convergent Fourier series on the torus~$\mathbb{T}^d$. The
norm on $A(\mathbb{T}^d)$ is naturally defined by
$\|f\|_{A}=\|\widehat{f}\|_{l^1}$, where $\widehat{f}$ is the
Fourier transform of a function $f$. For real
functions~$\varphi$ of a certain special form on $\mathbb
T^d, \,d\geq 2,$ we obtain lower bounds for the norms
$\|e^{i\lambda\varphi}\|_A$ as $\lambda\rightarrow\infty$. In
particular, we show that if $\varphi(x, y)=a(x)|y|$ for
$|y|\leq\pi$, where $a\in A(\mathbb{T})$ is an arbitrary
nonconstant real function, then
$\|e^{i\lambda\varphi}\|_{A(\mathbb{T}^2)}\gtrsim |\lambda|$.
\end{abstract}

\maketitle

\section{Introduction}\label{s1}

Let $A(\mathbb{T}^d)$ be the space of all continuous
functions~$f$ on the torus
$\mathbb{T}^d=\mathbb{R}^d/(2\pi\mathbb{Z})^d$ such that the
sequence $\widehat{f}=\{\widehat{f}(n), n\in\mathbb{Z}^d\}$ of
Fourier coefficients of $f$ is in $l^1(\mathbb{Z}^d)$. The
space~$A(\mathbb{T}^d)$ is a Banach space with respect to the
natural norm
$\|f\|_{A(\mathbb{T}^d)}=\|\widehat{f}\|_{l^1(\mathbb{Z}^d)}$.
It is well known that $A(\mathbb{T}^d)$ is a Banach algebra
(with pointwise multiplication of functions). Here, as usual,
$$
\widehat{f}(n)=\frac{1}{(2\pi)^d}\int_{\mathbb{T}^d}f(t)e^{-i(n,
t)}dt, \qquad n\in\mathbb{Z}^d,
$$
$\mathbb{R}$ and $\mathbb{Z}$ are the additive groups of reals
and integers, respectively, and $(\cdot, \cdot)$ is the inner
product.

Let $\varphi$ be a continuous mapping of the circle
$\mathbb{T}$ into itself, that is, a continuous function
$\varphi \colon \mathbb{R}\rightarrow\mathbb{R}$ satisfying the
condition $\varphi(t+2\pi)=\varphi(t) (\mathrm{mod}\,2\pi)$. By
the well-known Beurling--Helson theorem~\cite{1} (see
also~\cite[Sec.~VI. 9]{6} and \cite{7}), if
$\|e^{in\varphi}\|_{A(\mathbb{T})}=O(1)$, $n\in \mathbb{Z}$,
then $\varphi$ is affine. In other words, in this case we have
$\varphi(t)=\nu t+\varphi(0)$, where
$\nu\in\mathbb{Z}$.\footnote{This version of the
Beurling--Helson theorem is due to Kahane.} This theorem
readily gives the solution to the Levy problem on the
description of endomorphisms of the algebra $A(\mathbb{T})$.
All these endomorphisms are trivial; i.e., they have the form
$f(t)\rightarrow f(\nu t+t_0)$. This implies that only trivial
changes of variable are admissible in $A(\mathbb T)$. Another
version of the Beurling--Helson theorem concerns power-bounded
operators on $l^1$: if $U$ is a bounded translation invariant
operator from $l^1(\mathbb Z)$ to itself such that
$\|U^n\|_{l^1\rightarrow l^1}=O(1), ~n \in \mathbb Z$, then
$U=\xi S$, where $\xi$ is a complex number, $|\xi|=1$, and $S$
is a translation. Note also the following multidimensional
version of the Beurling--Helson theorem. (The multidimensional
case easily reduces to the one-dimensional one.) Let $\varphi
\colon \mathbb{T}^d\rightarrow\mathbb{T}$ be a continuous
mapping such that $\|e^{in\varphi}\|_{A(\mathbb{T}^d)}= O(1)$;
then $\varphi(t)=(\nu, t)+\varphi(0)$ (where
$\nu\in\mathbb{Z}^d$).

At the same time, although the Beurling--Helson theorem
establishes an unbounded growth of the norms
$\|e^{in\varphi}\|_{A}$ for non-affine mappings $\varphi$, the
character of growth of these norms is unclear.

We note, that instead of non-affine mappings~$\varphi : \mathbb
T^d\rightarrow\mathbb T$, one can consider real nonconstant
functions~$\varphi$ on~$\mathbb T^d$ and study the behavior of
the norms $\|e^{i\lambda\varphi}\|_{A(\mathbb T^d)}$ for large
frequencies $\lambda\in\mathbb R$ (without assuming the
frequences to be integer).\footnote{If $\varphi : \mathbb
T^d\rightarrow\mathbb T$ is continuous, then for some
$k\in\mathbb Z^d$ the function $\psi(t)=\varphi(t)+(k, t)$ is a
(real) continuous function on $\mathbb T^d$ and
$\|e^{in\varphi}\|_{A(\mathbb T^d)}=\|e^{in\psi}\|_{A(\mathbb
T^d)}$.}

It is easy to show that if $\varphi$ is a real $C^1$ -smooth
function on the circle $\mathbb T$, then
$\|e^{i\lambda\varphi}\|_{A(\mathbb T)}\lesssim|\lambda|^{1/2}$
as $\lambda\rightarrow\infty$ (see, e.g.,~\cite[Sec.~VI.
3]{6}). On the other hand lower bound for $\varphi\in
C^2(\mathbb T)$ has long been known; namely, if $\varphi\in
C^2(\mathbb T)$ is a real nonconstant function, then
$\|e^{i\lambda\varphi}\|_{A(\mathbb T)}\gtrsim|\lambda|^{1/2}$.
This estimate is contained implicitly in the work by
Leibenson~\cite{15} and in explicit form was obtained by
Kahane~\cite{3} (see also~\cite[Sec.~VI. 3]{6}). The proof is
based on the van der Corput lemma and essentially uses
nondegeneration of the curvature of a certain arc of the graph
of $\varphi$.

We note that in general the norms
$\|e^{i\lambda\varphi}\|_{A(\mathbb T)}$ can grow rather
slowly. It was shown by Kahane~\cite{3} (see
also~\cite[Sec.~VI. 2]{6}) that if $\varphi$ is a piecewise
linear nonconstant continuous real function on $\mathbb T$,
then $\|e^{i\lambda\varphi}\|_{A(\mathbb T)}\simeq\log
|\lambda|$. In this respect, let us recall Kahane's conjecture
about the possible essential improvement of the
Beurling--Helson theorem. Namely, Kahane conjectured that the
conclusion of the Beurling--Helson theorem holds under much
weaker assumption that
$\|e^{in\varphi}\|_{A(\mathbb{T})}=o(\log |n|)$. This
conjecture, proposed at the ICM'1962 \cite{4} and later
discussed in~\cite{6,7}, is still unproved. The first
strengthening of the Beurling--Helson theorem in this direction
was obtained by the present author~\cite{14} by means of
methods of number theory and additive combinatorics. Later,
Konyagin and Shkredov~\cite{10} improved the result by
combining the author's approach with a finer technique.

In the multidimensional case it is easy to show that if
$\varphi : \mathbb T^d\rightarrow\mathbb R, ~d\geq 2,$ is of
class $C^s$ with $s>d/2$, then
$\|e^{i\lambda\varphi}\|_{A(\mathbb T^d)}\lesssim
|\lambda|^{d/2}$, see~\cite{2} and \cite[Theorem~3]{13}. The
Leibenson--Kahane's result was extended to the multidimensional
case by Hedstrom~\cite{2}, who showed, that if $\varphi\in
C^2\cap A(\mathbb T^d)$ and the determinant of the matrix of
the second derivatives of $\varphi$ is not identically equal to
zero, then $\|e^{i\lambda\varphi}\|_{A(\mathbb
T^d)}\gtrsim|\lambda|^{d/2}$. This is proved by reduction to
the one-dimensional case.

The estimates for the norms of $e^{i\lambda\varphi}$ for $C^1$
phase functions $\varphi$, including those in the
multidimensional case, were obtained by the author
in~\cite{12,13} \footnote{These papers treat the general case
of the spaces $A_p, 1\leq p<2,$ of functions~$f$ with
$\widehat{f}\in l^p$.}. Certainly, in general, the approach
that uses curvature considerations fails in $C^1$ case. Note
also that for $C^1$ phase functions $\varphi$ the norms of
$e^{i\lambda\varphi}$ can grow nearly as slowly as those for
the piecewise linear functions, namely the author
showed~\cite{11} (see also~\cite{12}) that if $\gamma(\lambda)$
is an arbitrary positive function on $[0, +\infty)$ with
$\gamma(\lambda)\rightarrow\infty$ as
$\lambda\rightarrow+\infty$, then there exists a nowhere linear
(i.e., not linear on any interval) function $\varphi\in
C^1(\mathbb T)$ such that $\|e^{i\lambda\varphi}\|_{A(\mathbb
T)}=O(\gamma(|\lambda|)\log |\lambda|)$.

In the present paper we consider phase functions on $\mathbb
T^d, \,d\geq 2,$ of the following form. Let $a \colon
\mathbb{T}^k\rightarrow\mathbb{R}^m$ and $b \colon
\mathbb{T}^m\rightarrow\mathbb{R}^m$ be two mappings. Define a
function~$\varphi$ on $\mathbb{T}^{k+m}$ as the inner product
\begin{equation}\label{e1}
\varphi(x, y)=(a(x), b(y)), \qquad x\in \mathbb{T}^k,\quad y\in
\mathbb{T}^m.
\end{equation}
We require $a$ and $b$ to be of class $A$ (see the next
section), which guarantees that $e^{i\lambda\varphi}\in
A(\mathbb T^{k+m})$ for all $\lambda\in\mathbb R$. Assuming
that $b$ coincides with a non-degenerate affine mapping in some
domain $J\subseteq[-\pi, \pi]^m$, we obtain lower bounds for
the norms $\|e^{i\lambda\varphi}\|_{A(\mathbb{T}^{k+m})}$. In
particular the class of the phase functions we consider
includes those of the form
\begin{equation}\label{e2}
\varphi(x_1, \ldots, x_k, y_1, \ldots, y_m)=
\sum_{j=1}^m a_j(x_1, \ldots, x_k)|y_j|,
\quad (y_1, \ldots, y_m)\in [-\pi, \pi]^m,
\end{equation}
where $a_j, \,j=1, \ldots, m,$ are real functions in $A(\mathbb
T^k)$. Though these phase functions are of a very special kind,
they, as we will see, provide examples of $\varphi$'s for which
the growth of $\|e^{i\lambda\varphi}\|_{A(\mathbb T^{k+m})}$ is
very fast. Note that we do not assume a smoothness of the
mappings $a$ and $b$ in (1). The fast growth in the case we
consider is a consequence of the geometric structure of
$\varphi$ and not that of its smoothness, namely, the key role
is played by the massiveness of the image of the
torus~$\mathbb{T}^k$ under~$a$. The simplest application of the
results of the present paper pertains to the two-dimensional
case. If $\varphi(x, y)=a(x)|y|$, $|x|\leq \pi$, $|y|\leq\pi$,
where $a\in A(\mathbb{T})$ is an arbitrary nonconstant real
function, then
$\|e^{i\lambda\varphi}\|_{A(\mathbb{T}^2)}\gtrsim |\lambda|$.
When $d=k+m\geq 3$ our results show that the growth that
corresponds to the phase functions of the form (2) can be even
faster then that in the smooth case; for example, if a mapping
$a(x)=(a_1(x), a_2(x))$ of $\mathbb T$ into $\mathbb R^2$ is
\textit{space-filling} (see the next section), then, setting
$\varphi(x, y_1, y_2)=a_1(x)|y_1|+a_2(x)|y_2|$, we have
$\|e^{\lambda\varphi}\|_{A(\mathbb T^3)}\gtrsim |\lambda|^2$.
Note, that, for $\varphi\in C^2(\mathbb T^3)$ we have
$\|e^{i\lambda\varphi}\|_{A(\mathbb T^3)}\lesssim
|\lambda|^{3/2}$, according to the upper bound indicated above.

\section{Statement of the theorem. Corollaries for
space-filling mappings}\label{s2}

We measure the massiveness of a set in terms of its covering
number. Recall that the covering number $N_F(\varepsilon)$ of a
bounded set $F\subset\mathbb{R}^m$ is the smallest number of
balls of radius $\varepsilon>0$ needed to cover~$F$.

Let $s \colon  \mathbb{T}^p\rightarrow\mathbb{R}^q$ be some
mapping. Thus,
$$
 s(x)=(s_1(x), s_2(x), \ldots, s_q(x)), \qquad x\in\mathbb{T}^p.
$$
We say that $s$ is a \textit{mapping of class}~$A$ if all
coordinate functions $s_j : \mathbb T^p\rightarrow\mathbb R$,
$j=1, 2, \ldots, q$, are in~$A(\mathbb{T}^p)$. Clearly, if $a
\colon \mathbb{T}^k\rightarrow\mathbb{R}^m$ and $b \colon
\mathbb{T}^m\rightarrow\mathbb{R}^m$ are mappings of class~$A$,
then the corresponding function~$\varphi$ defined in~\eqref{e1}
belongs to $A(\mathbb{T}^{k+m})$, and hence
$e^{i\lambda\varphi}\in A(\mathbb{T}^{k+m})$ for all
$\lambda\in\mathbb{R}$.

We write $\xi(\lambda)\gtrsim\eta(\lambda)$ if there exists a
constant $c>0$ independent of $\lambda\in\mathbb{R}$ such that
$\xi(\lambda)\geq c\eta(\lambda)$ whenever $|\lambda|$ is
sufficiently large.

\begin{theorem}
Let $a \colon  \mathbb{T}^k\rightarrow \mathbb{R}^m$ and $b
\colon \mathbb{T}^m\rightarrow \mathbb{R}^m$ be mappings of
class~$A$. Assume that $b$ coincides with a non-degenerate
affine mapping in some domain $J\subseteq [-\pi, \pi]^m$. Let
$\varphi$~be the function on $\mathbb{T}^{k+m}$ defined as the
inner product $\varphi(x, y)=(a(x), b(y))$ for $x\in
\mathbb{T}^k$ and $y\in \mathbb{T}^m$. Then
$$
\|e^{i\lambda\varphi}\|_{A(\mathbb{T}^{k+m})}\gtrsim
N_W(1/|\lambda|), \qquad \lambda\in\mathbb{R}, \quad
|\lambda|\rightarrow\infty,
$$
where $W=a(\mathbb{T}^k)$ is the image of the torus $\mathbb{T}^k$
under~$a$.
\end{theorem}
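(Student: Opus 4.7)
The plan is to localize to the subdomain $J$ where $b$ acts as an affine map, reduce the phase to one that is linear in $y$, extract the Fourier coefficients in the $y$-variable, and count the integer points lying near the dilated image $\lambda B^{\top}W$.

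Write $b(y)=By+c$ on $J$ with $\det B\neq 0$ and fix, independently of $\lambda$, a nonnegative smooth bump $\chi$ on $\mathbb{T}^{m}$ supported strictly inside $J\cap(-\pi,\pi)^{m}$ with $\int\chi>0$. Since $A(\mathbb{T}^{m})$ is a Banach algebra, $\|\chi e^{i\lambda\varphi}\|_{A}\le\|\chi\|_{A}\|e^{i\lambda\varphi}\|_{A}$, so it suffices to bound from below $\|g\|_{A}$ with $g(x,y)=\chi(y)e^{i\lambda\varphi(x,y)}$. On $\mathrm{supp}\,\chi$ the phase splits, $\varphi(x,y)=(a(x),c)+(B^{\top}a(x),y)$, and because $\chi$ lives in a single fundamental cube a direct calculation yields, for $(m,n)\in\mathbb{Z}^{k}\times\mathbb{Z}^{m}$,
$$
\widehat{g}(m,n)=\frac{1}{(2\pi)^{k+m}}\int_{\mathbb{T}^{k}}e^{i\lambda(a(x),c)}\,\widetilde{\chi}\bigl(n-\lambda B^{\top}a(x)\bigr)\,e^{-i(m,x)}\,dx,
$$
where $\widetilde{\chi}$ denotes the Euclidean Fourier transform of $\chi$ on $\mathbb{R}^{m}$.

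For each fixed $n$, the integrand above is, up to a constant factor, the $m$-th Fourier coefficient on $\mathbb{T}^{k}$ of the function $F_{n}(x):=e^{i\lambda(a(x),c)}\widetilde{\chi}(n-\lambda B^{\top}a(x))$, so $\sum_{m}|\widehat{g}(m,n)|\gtrsim\|F_{n}\|_{\infty}$. Since $\widetilde{\chi}$ is continuous with $\widetilde{\chi}(0)=\int\chi>0$, one has $|\widetilde{\chi}(\xi)|\ge c_{1}>0$ on some Euclidean ball $|\xi|\le r$. Summing over $n$ gives
$$
\|g\|_{A}\gtrsim\sum_{n\in\mathbb{Z}^{m}}\sup_{x\in\mathbb{T}^{k}}\bigl|\widetilde{\chi}(n-\lambda B^{\top}a(x))\bigr|\gtrsim\#\bigl\{n\in\mathbb{Z}^{m}:\mathrm{dist}(n,\lambda B^{\top}W)\le r\bigr\}.
$$

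It then remains to compare this lattice-point count with $N_{W}(1/|\lambda|)$. Let $\rho$ be a fixed sufficiently large multiple of $\sqrt{m}$ and take a maximal $\rho$-separated subset of $\lambda B^{\top}W$; its cardinality is at least $N_{\lambda B^{\top}W}(2\rho)$ (maximality gives a $2\rho$-covering), and the nearest integer to each of its points is distinct and lies within $\sqrt{m}/2\le r$ of $\lambda B^{\top}W$. Since $B^{\top}$ is bilipschitz ($\det B\neq 0$), rescaling by $|\lambda|^{-1}$ yields $N_{\lambda B^{\top}W}(2\rho)\gtrsim N_{W}(c_{0}/|\lambda|)$ for some $c_{0}>0$ depending only on $B$ and $m$; the elementary doubling estimate $N_{W}(\varepsilon/K)\le C_{m,K}\,N_{W}(\varepsilon)$, valid for bounded subsets of $\mathbb{R}^{m}$, absorbs $c_{0}$ and produces the claimed lower bound $\|g\|_{A}\gtrsim N_{W}(1/|\lambda|)$. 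The step I expect to demand the most care is this last comparison of lattice-point counts with covering numbers uniformly in $\lambda$; the localization and the explicit computation of $\widehat{g}$ are routine once $\chi$ is chosen independently of $\lambda$.
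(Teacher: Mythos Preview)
Your argument is essentially correct and reaches the conclusion by a route genuinely different from the paper's. The paper localizes in \emph{both} variables with triangle windows $\Delta_{I_{\lambda,v}\times J}$, where the $x$-window $I_{\lambda,v}$ has $\lambda$-dependent size governed by the modulus of continuity of $a$; it then works with the continuous Fourier transform on $\mathbb{R}^{k+m}$, proves a pointwise lower bound for $(\Delta_{I_{\lambda,v}\times J}e^{i\lambda\varphi})^{\wedge}$ on a region of frequency space (Lemma~2), and finishes by integrating over $(-1/\delta_\lambda,1/\delta_\lambda)^k\times(\lambda W)_{\varepsilon_0}$ and invoking the volume--covering-number inequality $|(F)_\varepsilon|\gtrsim N_F(2\varepsilon)\varepsilon^m$. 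Your approach instead localizes only in $y$ with a single $\lambda$-independent bump, and replaces the entire frequency-concentration machinery by the trivial estimate $\sum_{m}|\widehat{F_n}(m)|=\|F_n\|_{A(\mathbb T^k)}\ge\|F_n\|_\infty$, reducing matters to a direct lattice-point count near $\lambda B^{\top}W$. This is decidedly more elementary for the phase functions at hand; the paper's method is the author's general ``concentration of Fourier-transform large values'' technique, which also handles $C^1$ phases where no such clean product/linear structure is available.

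Two small points deserve tightening. First, you assert $\sqrt{m}/2\le r$, but $r$ was produced merely by continuity of $\widetilde\chi$ at the origin and could a priori be tiny; you should say explicitly at the outset that $\chi$ is chosen supported in a sufficiently small ball (if $\mathrm{supp}\,\chi\subset B(0,\delta)$ then $|\widetilde\chi(\xi)-\widetilde\chi(0)|\le\delta|\xi|\int\chi$, so one may take $r=1/(2\delta)\ge\sqrt{m}/2$). Without this, the lattice-point step can fail: a long segment of irrational slope need not have $\gtrsim\lambda$ integer points within a fixed small $r$. Second, there is an unfortunate clash of notation---$m$ is both the dimension of the $y$-variable and your Fourier index in $\mathbb{Z}^k$---and ``the integrand above is \ldots\ the $m$-th Fourier coefficient'' should read ``$\widehat g(m,n)$ is \ldots''; both are cosmetic but worth cleaning up.
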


The proof of the theorem is given in Section~\ref{s3}. Mappings
of class~$A$ and some related open problems are discussed in
Remarks at the end of the paper. For now, let us indicate two
immediate corollaries of the theorem.

\begin{corollary}\label{cy1}
Let $a\in A(\mathbb{T})$ and $b\in A(\mathbb{T})$ be real
nonconstant functions. Assume that $b$~is linear on some
interval. Let $\varphi$~be the function on $\mathbb{T}^2$ given
by $\varphi(x, y)=a(x)b(y)$, $x\in\mathbb{T}$,
$y\in\mathbb{T}$. Then
$$
\|e^{i\lambda\varphi}\|_{A(\mathbb{T}^2)}\gtrsim
|\lambda|, \qquad \lambda\in\mathbb{R}, \quad
|\lambda|\rightarrow\infty.
$$
\end{corollary}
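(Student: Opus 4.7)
The plan is to derive the Corollary as a direct specialization of the Theorem to $k=m=1$. With $k=m=1$, the inner product on $\mathbb R^1$ reduces to ordinary multiplication, so taking the mappings in the Theorem to be the given real functions $a,b\in A(\mathbb T)$ (both trivially of class $A$) yields
\[
\varphi(x,y)=(a(x),b(y))=a(x)b(y),
\]
matching the $\varphi$ of the Corollary. The hypothesis ``$b$ coincides with a non-degenerate affine mapping on some $J\subseteq[-\pi,\pi]^m=[-\pi,\pi]$'' reduces, in dimension one, to $b$ having the form $y\mapsto \alpha y+\beta$ with $\alpha\ne 0$ on some subinterval of $[-\pi,\pi]$; this is precisely the ``linear on some interval'' assumption in the Corollary, read in the non-degenerate (nonzero-slope) sense.

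Applying the Theorem yields
\[
\|e^{i\lambda\varphi}\|_{A(\mathbb T^2)}\gtrsim N_W(1/|\lambda|),\qquad W=a(\mathbb T)\subset\mathbb R.
\]
It remains to bound $N_W(1/|\lambda|)$ from below by $|\lambda|$. Since $a$ is continuous (being in $A(\mathbb T)$) and $\mathbb T$ is connected, $W$ is a connected subset of $\mathbb R$, i.e., an interval; since $a$ is nonconstant, this interval has positive length $L>0$. An interval of length $L$ cannot be covered by fewer than $L/(2\varepsilon)$ balls of radius $\varepsilon$, so $N_W(\varepsilon)\geq L/(2\varepsilon)$. Setting $\varepsilon=1/|\lambda|$ gives $N_W(1/|\lambda|)\geq (L/2)|\lambda|\gtrsim|\lambda|$, which finishes the proof.

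The only subtle point is the reading of the ``linear on some interval'' hypothesis in the non-degenerate sense (if the slope were zero, $b$ would merely be constant on $J$ and the Theorem would not apply as stated). Once that is granted, the proof is bookkeeping: a one-dimensional application of the Theorem together with the elementary lower bound on the covering number of a non-degenerate interval in $\mathbb R$. There is no substantive obstacle; all the work is encapsulated in the Theorem itself.
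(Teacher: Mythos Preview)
Your proposal is correct and matches the paper's approach exactly: the paper states Corollary~1 as an ``immediate corollary'' of the Theorem without a separate proof, and your derivation is precisely the intended specialization to $k=m=1$ together with the observation that $W=a(\mathbb T)$ is a nondegenerate interval, so that $N_W(1/|\lambda|)\gtrsim|\lambda|$. Your remark that ``linear on some interval'' must be read as having nonzero slope (so that the one-dimensional $P$ satisfies $\det P\neq 0$) is the only point worth noting, and you handle it correctly.
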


In particular, this corollary implies the estimate mentioned at
the end of Introduction: if a function~$\varphi$ on
$\mathbb{T}^2$ has the form $\varphi(x, y)=a(x)|y|$, $-\pi\leq
y\leq\pi$, where $a\in A(\mathbb{T})$ is an arbitrary
nonconstant real function, then
$\|e^{i\lambda\varphi}\|_{A(\mathbb{T}^2)}\gtrsim |\lambda|$.

The second corollary deals with the general case in which the
range of the mapping~$a$ is maximally massive. We say that a
mapping $a \colon \mathbb{T}^k\rightarrow\mathbb{R}^m$ is
\textit{space~-filling} if the image $a(\mathbb{T}^k)$ has
non-empty interior. A~weaker condition is that the Lebesgue
measure of the set $a(\mathbb{T}^k)$ is positive. It is not
difficult to see that there exist space-filling mappings~$a$ of
class~$A$ for any~$k$ and~$m$. Let us verify this. Clearly,
only the case of increasing dimension, that is, the case of
$m>k$, is non-trivial. Note that if $f\in A(\mathbb{T})$, then
the function~$F$ on~$\mathbb{T}^k$ defined by $F(x_1, x_2,
\ldots, x_k)=f(x_1)$ satisfies $F\in A(\mathbb{T}^k)$. Hence it
suffices to consider the case of $k=1$; i.e., it suffices to
show that there exist space-filling curves of class~$A$
in~$\mathbb{R}^m$. Recall that a closed set $E\subseteq
\mathbb{T}$ is called a \textit{Helson set} if every function
continuous on~$E$ is the restriction to~$E$ of some function
in~$A(\mathbb{T})$. Fix a perfect nowhere dense set~$E$ that is
a Helson set. (For the existence of such sets see, e.g.,
~\cite[Theorems~5.2.2 and~5.6.6]{17}.) Let $K$~be a closed cube
in~$\mathbb{R}^m$. Since $E$~is perfect and nowhere dense, it
follows (by the classical Hausdorff--Alexandroff theorem) that
there exists a continuous mapping $\alpha(t)=(\alpha_1(t),
\alpha_2(t), \ldots, \alpha_m(t))$ of~$E$ onto~$K$. Since
$E$~is a Helson set, we see that for each $j=1, 2, \ldots, m$
there exists a function $a_j\in A(\mathbb{T})$ coinciding
with~$\alpha_j$ on~$E$. By taking the real parts, we can assume
that all~$a_j$ are real. Let $a(t)=(a_1(t), a_2(t), \ldots,
a_m(t))$, $t\in \mathbb{T}$. We obtain $a(\mathbb{T})\supseteq
a(E)=\alpha(E)=K$, as desired.

Clearly, if $W\subseteq\mathbb{R}^m$ is an arbitrary set of
positive measure, then
$$
  \inf_{\varepsilon>0}N_W(\varepsilon)\varepsilon^m>0;
$$
hence we obtain the following corollary of the theorem.

\begin{corollary}\label{cy2}
In addition to the assumptions of the theorem, let the mapping
$a \colon \mathbb{T}^k\rightarrow\mathbb{R}^m$ be space-filling
at least in the weak sense\textup; i.e., let $a(\mathbb{T}^k)$
have positive Lebesgue measure. Then
$$
\|e^{i\lambda\varphi}\|_{A(\mathbb{T}^{k+m})}\gtrsim
|\lambda|^m, \qquad \lambda\in\mathbb{R}, \quad
|\lambda|\rightarrow\infty.
$$
\end{corollary}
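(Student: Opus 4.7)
The plan is to deduce Corollary 2 directly from the Theorem by estimating the covering number $N_W(\varepsilon)$ from below in terms of the Lebesgue measure of $W=a(\mathbb{T}^k)$. The Theorem already supplies the lower bound
$$
\|e^{i\lambda\varphi}\|_{A(\mathbb{T}^{k+m})}\gtrsim N_W(1/|\lambda|),
$$
valid under the hypotheses of the corollary, so all that remains is to verify that positive measure of $W$ forces $N_W(\varepsilon)\gtrsim\varepsilon^{-m}$ as $\varepsilon\to 0^+$, whence substituting $\varepsilon=1/|\lambda|$ yields the claimed $|\lambda|^m$ lower bound.

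The covering number estimate is the elementary volume comparison. Suppose $N=N_W(\varepsilon)$ balls $B_1,\ldots,B_N\subset\mathbb{R}^m$ of radius $\varepsilon$ cover $W$. Then, writing $v_m$ for the Lebesgue measure of the unit ball in $\mathbb{R}^m$, subadditivity of Lebesgue measure gives
$$
|W|\leq\sum_{j=1}^{N}|B_j|=N\,v_m\,\varepsilon^m,
$$
so $N_W(\varepsilon)\geq |W|/(v_m\varepsilon^m)$. Since $|W|>0$ by hypothesis, this is exactly the assertion $\inf_{\varepsilon>0}N_W(\varepsilon)\varepsilon^m>0$ noted in the paragraph preceding the corollary.

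Combining the two bounds, for all sufficiently large $|\lambda|$,
$$
\|e^{i\lambda\varphi}\|_{A(\mathbb{T}^{k+m})}\gtrsim N_W(1/|\lambda|)\geq \frac{|W|}{v_m}\,|\lambda|^m,
$$
which is the stated conclusion. There is essentially no obstacle here: the entire content of the corollary is the translation of the Theorem's geometric lower bound, phrased in terms of covering numbers, into a quantitative bound in $|\lambda|$, and the only ingredient needed beyond the Theorem itself is the volumetric inequality above. (Note that we do not need the stronger space-filling hypothesis that $a(\mathbb{T}^k)$ have nonempty interior; positive Lebesgue measure alone already forces the maximal rate $\varepsilon^{-m}$ for the covering number, because the volume bound is insensitive to topological thinness of $W$.)
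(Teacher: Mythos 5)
Your proof is correct and follows exactly the route the paper takes: the paper deduces the corollary from the theorem by observing that positive Lebesgue measure of $W$ forces $\inf_{\varepsilon>0}N_W(\varepsilon)\varepsilon^m>0$, which is precisely the volume-comparison inequality you spell out. You have merely made explicit the one-line subadditivity argument the paper labels ``clearly,'' so nothing is missing and nothing differs in substance.
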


We note that if $\varphi$ is a sufficiently smooth real
function on~$\mathbb{T}^{k+m}$, then, according to the upper
bound indicated in the introduction, we have
$\|e^{i\lambda\varphi}\|_{A(\mathbb{T}^{k+m})}\lesssim
|\lambda|^{(k+m)/2}$. On the other hand, by taking a
space-filling mapping
$$
(a_1(x), \ldots, a_m(x)) \colon \mathbb{T}^k\rightarrow\mathbb{R}^m
$$
of class $A$ and taking $\varphi$ to be of the form \eqref{e2},
we find in the non-trivial case of dimension-raising~$a$, that
is, for $m>k$, that a substantially faster growth of the $A$
-norms of $e^{i\lambda\varphi}$ occurs. Namely, we have
$\|e^{i\lambda\varphi}\|_{A(\mathbb{T}^{k+m})}
\gtrsim|\lambda|^m$ by Corollary~\ref{cy2}. (Certainly this
effect is possible only when $k+m\geq 3$.)

\section{Proof of the theorem}\label{s3}

Let $A(\mathbb{R}^d)$ be the space of all functions~$f$ of the
form
$$
f(t)=\int_{\mathbb{R}^d}g(\xi)e^{i(x,t)} dx,
$$
where $g\in L^1(\mathbb{R}^d)$. This space is closely related
to~$A(\mathbb{T}^d)$. It is a Banach algebra with respect to
the norm $\|f\|_{A(\mathbb{R}^d)}=\|g\|_{L^1(\mathbb{R}^d)}$
and the usual multiplication of functions. It is convenient to
give an equivalent definition by saying that $A(\mathbb{R}^d)$
contains every continuous bounded function on~$\mathbb{R}^d$
whose Fourier transform~$\widehat{f}$ in the sense of tempered
distributions belongs to~$L^1(\mathbb{R}^d)$. Naturally we let
$\|f\|_{A(\mathbb{R}^d)}=\|\widehat{f}\|_{L^1(\mathbb{R}^d)}$.
Here the normalizing factor in the Fourier transform is chosen
so that
$$
\widehat{f}(x)=\frac{1}{(2\pi)^d}\int_{\mathbb{R}^d}f(t)e^{-i(x,t)}
dt
$$
for $f\in L^1(\mathbb{R}^d)$. Technically, it is more
convenient to use the $A(\mathbb{R}^d)$ -norm, so the symbol
$\,\widehat{}\,$ will denote the Fourier transform
in~$\mathbb{R}^d$ everywhere in the proof of the theorem.

We will use the following notation. If $\xi\in\mathbb{R}^d$,
then $|\xi|$~stands for the length of the vector~$\xi$. Let
$F\subseteq\mathbb{R}^d$ be an arbitrary set.
By~$(F)_\varepsilon$ we denote the
$\varepsilon$~\nobreakdash-neighbourhood of~$F$; i.e.,
$(F)_\varepsilon=\{t\in\mathbb{R}^d \colon  \inf_{x\in
F}|t-x|\leq\varepsilon\}$. For $\lambda\in\mathbb{R}$, let
$\lambda F=\{\lambda x, x\in F\}$. If $F$~is measurable, then
$|F|$~stands for its (Lebesgue) measure. The symbol~$\ast$
denotes the convolution of functions in~$L^1(\mathbb{R}^d)$ or
the convolution of measures. This notation is used for
various~$d$, but this will not lead to a misunderstanding.

The proof of the theorem is based on a modification of our
method, that we used in ~\cite{12,13} for $C^1$ -phase
functions (it can be called the concentration of Fourier
transform large values method).

We will need some preliminary constructions and lemmas.

For $\delta>0$, let $\Delta_\delta$~be the ``triangle
function'' supported by the interval $(-\delta, \delta)$; that
is,
$$
\Delta_\delta(t)=\max(0, 1-|t|/\delta), \qquad t\in\mathbb{R}.
$$
It is well known (and easy to verify) that
\begin{equation}\label{e3}
\widehat{\Delta_\delta}(u)=\frac{2}{\pi}\frac{\sin^2 (\delta
u/2)}{\delta u^2}, \quad u\in\mathbb{R}\setminus\{0\}; \qquad
\widehat{\Delta_\delta}(0)=\frac{\delta}{2\pi}.
\end{equation}
Note that since $\widehat{\Delta_\delta}(u)\geq 0$ for all
$u\in\mathbb{R}$, it follows that
$$
\|\Delta_\delta\|_{A(\mathbb{R})}=\Delta(0)=1.
$$

For an arbitrary interval $Q\subseteq\mathbb{R}$, let
$\Delta_Q$ be the triangle function supported by~$Q$; that is,
$\Delta_Q(t)=\Delta_{|Q|/2}(t-c_Q)$, where~$c_Q$ is the center
of the interval~$Q$. One has
$|\widehat{\Delta_Q}|=\widehat{\Delta_{|Q|/2}}$ and hence
\begin{equation}\label{e4}
\|\Delta_Q\|_{A(\mathbb{R})}=1.
\end{equation}

Let $Q_1, Q_2, \ldots Q_d$ be intervals in~$\mathbb{R}$. For
the parallelepiped $Q=Q_1\times Q_2\times\ldots \times
Q_d\subseteq\mathbb{R}^d$, set
\begin{equation}\label{e5}
\Delta_Q(t)=\Delta_{Q_1}(t_1)\Delta_{Q_2}(t_2)\ldots\Delta_{Q_d}(t_d),
\qquad t=(t_1, t_2, \ldots, t_d)\in\mathbb{R}^d.
\end{equation}
Obviously (see~\eqref{e4}),
\begin{equation}\label{e6}
\|\Delta_Q\|_{A(\mathbb{R}^d)}=1.
\end{equation}
In addition, if~$Q$ is centered at the origin, then
$\widehat{\Delta_Q}\geq 0$.

The following simple lemma is of a technical nature.

\begin{lemma}\label{l1}
Let $Q\subseteq\mathbb{R}^d$ be a cube with edges of length
$2\delta$ parallel to the coordinate axes. Then
$|\widehat{\Delta_Q}(u)|\geq 4^{-d}(\delta/2\pi)^d$ for all
$u\in (-1/\delta, 1/\delta)^d$.
\end{lemma}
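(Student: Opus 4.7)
The plan is to reduce to a one-dimensional estimate by factoring $\widehat{\Delta_Q}$ as a product. By definition \eqref{e5} and the multiplicative behavior of the Fourier transform on tensor products, $\widehat{\Delta_Q}$ is the product of the one-dimensional Fourier transforms $\widehat{\Delta_{Q_j}}$ over the coordinates. Since each $Q_j$ is an interval of length $2\delta$, centered at some $c_{Q_j}$, the translation only introduces a phase, so $|\widehat{\Delta_{Q_j}}(u_j)| = \widehat{\Delta_\delta}(u_j)$. Taking absolute values therefore gives
\[
|\widehat{\Delta_Q}(u)| = \prod_{j=1}^d \widehat{\Delta_\delta}(u_j),
\]
and it suffices to show $\widehat{\Delta_\delta}(v) \geq \frac{1}{4}\cdot\frac{\delta}{2\pi}$ for every $v \in (-1/\delta, 1/\delta)$, since then the product bound $4^{-d}(\delta/2\pi)^d$ follows immediately by multiplying over $j$.

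For the one-dimensional estimate I plan to use the explicit formula \eqref{e3}: for $v\neq 0$, $\widehat{\Delta_\delta}(v) = \frac{2}{\pi}\frac{\sin^2(\delta v/2)}{\delta v^2}$. When $|v| < 1/\delta$, the quantity $x := \delta v/2$ lies in $(-1/2, 1/2)$. On this range the elementary inequality $|\sin x| \geq |x|/2$ holds (it follows, for example, from $\sin x \geq x - x^3/6$, since $x^2/6 \leq 1/24 < 1/2$ for $|x|\leq 1/2$). Squaring yields $\sin^2(\delta v/2) \geq \delta^2 v^2/16$, and substituting into the formula gives
\[
\widehat{\Delta_\delta}(v) \geq \frac{2}{\pi}\cdot\frac{\delta^2 v^2/16}{\delta v^2} = \frac{\delta}{8\pi} = \frac{1}{4}\cdot\frac{\delta}{2\pi}.
\]
The case $v=0$ is handled directly by \eqref{e3}, which gives $\widehat{\Delta_\delta}(0) = \delta/(2\pi)$, comfortably exceeding $\frac{1}{4}\cdot \delta/(2\pi)$.

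There is essentially no obstacle here: the lemma is purely computational, and the only minor point is choosing a convenient elementary lower bound for $|\sin x|/|x|$ on $[-1/2, 1/2]$. Any sharp enough bound works; $|\sin x|\geq |x|/2$ is the simplest one that matches the constant $4^{-d}$ stated in the lemma.
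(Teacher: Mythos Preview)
Your argument is correct and essentially identical to the paper's: reduce to the one-dimensional case via the tensor factorization~\eqref{e5} and the fact that translation only contributes a phase, then use $|\sin\alpha|\geq|\alpha|/2$ in~\eqref{e3} to get $\widehat{\Delta_\delta}(v)\geq \delta/(8\pi)$ for $|v|<1/\delta$. The paper states the sine inequality on the slightly larger range $|\alpha|\leq 1$, but otherwise the proofs coincide.
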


\begin{proof}
Since $|\sin\alpha|\geq |\alpha|/2$ for $|\alpha|\leq 1$,
from~\eqref{e3} it follows that
$$
\widehat{\Delta_\delta}(u)\geq\frac{\delta}{8\pi} \quad
\textrm{for} \quad |u|\leq 1/\delta.
$$
Since the cube $Q$ is obtained by a shift of the cube
$(-\delta, \delta)^d$, we have
$|\widehat{\Delta_Q}(u)|=\widehat{\Delta_{(-\delta,
\delta)^d}}(u)$. It remains to use relation~\eqref{e5}. The
proof of the lemma is complete.
\end{proof}

Let $\omega$ be the modulus of continuity of the mapping~$a$; i.e.,
$$
\omega(\delta)=\sup_{|x_1-x_2|\leq \delta}|a(x_1)-a(x_2)|,
\quad \delta\geq 0.
$$
The function $\omega(\delta)$ is non-decreasing and continuous
on $[0, +\infty)$, and $\omega(0)=0$. We will assume that the
mapping~$a$ is nonconstant; otherwise the assertion of the
theorem is obvious. Thus, $\omega(\delta)>0$ for
all~$\delta>0$.

By the assumption of the theorem, $b(y)=Py+y_0$ in some domain
$J\subseteq [-\pi, \pi]^m$, where $y_0\in\mathbb{R}^m$ and $P$
is a real $m\times m$ matrix with $\det P\neq 0$. Without loss
of generality, we can assume that $J=(-l, l)^m$, where
$0<l<\pi$. By~$P^\ast$ we denote the transpose of~$P$.

Set
$$
\rho=\sup_{y\in J}|Py+y_0|.
$$
Fix a constant $\varepsilon_0$ such that
\begin{equation}\label{e7}
0<\varepsilon_0\leq 1/2, \qquad 2\rho
\varepsilon_0\leq\frac{1}{2}4^{-k}.
\end{equation}

For each sufficiently large $\lambda>0$, define $\delta_\lambda$ by
the condition
\begin{equation}\label{e8}
\lambda\omega(\sqrt{k}2\delta_\lambda)=\varepsilon_0.
\end{equation}

The following lemma is the key claim for the proof of the
theorem.

\begin{lemma}\label{l2}
Let $\lambda>0$ be sufficiently large. Then for each $v\in (\lambda
W)_{\varepsilon_0}$ there exists a cube $I_{\lambda, v}\subseteq
[-\pi, \pi]^k$ with edges of length $2\delta_\lambda$ parallel to
coordinate axes such that
$$
|(\Delta_{I_{\lambda, v}\times J}e^{i\lambda\varphi})^\wedge(u,
P^\ast v)|\geq c\delta_\lambda^k
$$
for all $u\in (-1/\delta_\lambda, 1/\delta_\lambda)^k$. The
constant $c>0$ is independent of $u$, $v$, and~$\lambda$.
\end{lemma}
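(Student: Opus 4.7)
The plan is to pick, for each $v$, a cube $I_{\lambda,v}$ around some $x_0$ with $\lambda a(x_0)\approx v$, then to exploit the affinity of $b$ on $J$ to rearrange the integrand of $(\Delta_{I_{\lambda,v}\times J}e^{i\lambda\varphi})^\wedge(u,P^*v)$ so that after the rearrangement the phase is uniformly close to a constant on $I_{\lambda,v}\times J$. The transform then splits into a main term $\widehat{\Delta_{I_{\lambda,v}}}(u)\widehat{\Delta_J}(0)$, controlled from below by Lemma~\ref{l1}, plus a perturbation that the smallness of $\varepsilon_0$ in~\eqref{e7} forces to be negligible.

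Concretely, since $v\in(\lambda W)_{\varepsilon_0}$, fix $x_0\in\mathbb{T}^k$ with $|v-\lambda a(x_0)|\le\varepsilon_0$. Take $I_{\lambda,v}\subseteq[-\pi,\pi]^k$ to be an axis-parallel cube of half-edge $\delta_\lambda$ whose center is within $\sqrt{k}\,\delta_\lambda$ of $x_0$; this may require shifting $x_0$ modulo $2\pi$ toward the interior of the fundamental domain, which is precisely what the factor $2$ inside $\omega$ in~\eqref{e8} is designed to absorb. Every $x\in I_{\lambda,v}$ then lies within $2\sqrt{k}\,\delta_\lambda$ of $x_0$, and~\eqref{e8} gives $\lambda|a(x)-a(x_0)|\le\varepsilon_0$ and hence $|\lambda a(x)-v|\le 2\varepsilon_0$ throughout the cube.

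The key algebraic step is the identity
\begin{equation*}
\lambda\varphi(x,y)-(P^*v,y) \;=\; (\lambda a(x)-v,\,b(y))+(v,y_0),\qquad y\in J,
\end{equation*}
obtained by expanding $b(y)=Py+y_0$ and using $(P^*v,y)=(v,Py)$: the linear-in-$y$ part of $\lambda\varphi$ cancels exactly against $(P^*v,y)$, leaving a remainder whose modulus is at most $|\lambda a(x)-v|\cdot|b(y)|\le 2\rho\varepsilon_0\le\tfrac12 4^{-k}$ on $I_{\lambda,v}\times J$. Writing $e^{i(\lambda a(x)-v,b(y))}=1+r(x,y)$ with $\|r\|_\infty\le 2\rho\varepsilon_0$ converts the desired Fourier coefficient into $e^{i(v,y_0)}\bigl[\widehat{\Delta_{I_{\lambda,v}}}(u)\widehat{\Delta_J}(0)+E(u)\bigr]$.

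By Lemma~\ref{l1}, on $u\in(-1/\delta_\lambda,1/\delta_\lambda)^k$ the main term has modulus at least $4^{-k}(\delta_\lambda/2\pi)^k(l/2\pi)^m$, i.e.\ $\ge c\delta_\lambda^k$ for a constant depending only on $k$, $m$, $l$. The crude estimate $|E(u)|\le\|r\|_\infty\,\|\Delta_{I_{\lambda,v}}\|_{L^1}\,\|\Delta_J\|_{L^1}/(2\pi)^{k+m}$, together with $2\rho\varepsilon_0\le\tfrac12 4^{-k}$ from~\eqref{e7}, bounds $|E(u)|$ by half the main term, so the lemma follows by the triangle inequality. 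The only conceptually delicate step is the identity above: once one recognises that the linearity of $b$ makes the $y$-linear component of the phase cancel exactly against $(P^*v,y)$, leaving a uniformly small remainder, everything else is bookkeeping with the constants built into~\eqref{e7} and~\eqref{e8}.
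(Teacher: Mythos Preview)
Your argument is correct and is essentially the paper's own proof: the paper introduces the auxiliary phase $\varphi_{\lambda,v}(x,y)=(\lambda^{-1}v,\,Py+y_0)$ and compares $e^{i\lambda\varphi}$ with $e^{i\lambda\varphi_{\lambda,v}}$, which is exactly your identity $\lambda\varphi(x,y)-(P^\ast v,y)=(\lambda a(x)-v,\,b(y))+(v,y_0)$ written in different notation, and then bounds the main term by Lemma~\ref{l1} and the remainder via \eqref{e7} just as you do. The only cosmetic difference is that the paper simply requires $x_{\lambda,v}\in I_{\lambda,v}$ (using the cube's diameter $2\sqrt{k}\,\delta_\lambda$ directly) rather than your center-within-$\sqrt{k}\,\delta_\lambda$ formulation, so your remark about ``shifting $x_0$ modulo $2\pi$'' is unnecessary---one just slides the cube inside $[-\pi,\pi]^k$ once $2\delta_\lambda<2\pi$.
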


\begin{proof}
Let $v\in (\lambda W)_{\varepsilon_0}$. Then one can find a
point $x_{\lambda, v}\in [-\pi, \pi]^k$ such that
\begin{equation}\label{e9}
|v-\lambda a(x_{\lambda, v})|\leq \varepsilon_0.
\end{equation}
Let us assume that $\lambda>0$ is so large that
$2\delta_\lambda<2\pi$. Then we can find a (closed) cube
$I_{\lambda, v}\subseteq [-\pi, \pi]^k$, with edges of length
$2\delta_\lambda$ parallel to coordinate axes, such that
$x_{\lambda, v}\in I_{\lambda, v}$. Let us verify that the
conclusion of the lemma holds for this cube.

Define a function $\varphi_{\lambda, v}$ by setting
$$
\varphi_{\lambda, v}(x, y)=\bigg(\frac{1}{\lambda}v, Py+y_0\bigg),
\qquad x\in \mathbb{R}^k, \quad y\in\mathbb{R}^m.
$$
Let $x\in I_{\lambda, v}$ and $y\in J$. Then (see~\eqref{e8},
\eqref{e9})
\begin{align*}
 |\varphi(x, y)-\varphi_{\lambda, v}(x, y)|&=|(a(x),
Py+y_0)-(\lambda^{-1}v, Py+y_0)|
\\
&=|(a(x)-\lambda^{-1}v, Py+y_0)|\leq\rho|a(x)-\lambda^{-1}v|
\\
&\leq \rho|a(x)-a(x_{\lambda, v})|+\rho|a(x_{\lambda,
v})-\lambda^{-1}v|
\\
&\leq\rho\omega(\sqrt{k}2\delta_\lambda)
+\rho\varepsilon_0/\lambda=2\rho\varepsilon_0/\lambda.
\end{align*}
So,
$$
|e^{i\lambda\varphi(x, y)}-e^{i\lambda\varphi_{\lambda, v}(x, y)}|\leq 2\rho\varepsilon_0.
$$
Hence for all $u\in\mathbb{R}^k$, in view of~\eqref{e7}, we
obtain
\begin{equation}\label{e10}
\begin{split}
|(\Delta_{I_{\lambda, v}\times J}&e^{i\lambda\varphi})^\wedge(u,
P^\ast v)- (\Delta_{I_{\lambda, v}\times J}
e^{i\lambda\varphi_{\lambda, v}})^\wedge(u, P^\ast v)|
\\
&\leq\frac{1}{(2\pi)^{k+m}}\iint_{\mathbb{R}^k\times\mathbb{R}^m}
\Delta_{I_{\lambda, v}\times J}(x, y) |e^{i\lambda\varphi(x,
y)}-e^{i\lambda\varphi_{\lambda, v}(x, y)}|dx dy
\\
&\leq|\widehat{\Delta_{I_{\lambda, v}\times J}}(0)|2\rho
\varepsilon_0= 2\rho
\varepsilon_0|\widehat{\Delta_{J}}(0)||\widehat{\Delta_{I_{\lambda,
v}}}(0)|
\\
&\leq\frac{1}{2}4^{-k}|\widehat{\Delta_{J}}(0)|\bigg(\frac{\delta_\lambda}{2\pi}\bigg)^k.
\end{split}
\end{equation}
At the same time,
\begin{align*}
(\Delta_{I_{\lambda, v}\times J}& e^{i\lambda\varphi_{\lambda,
v}})^\wedge(u, P^\ast v)
\\
&=\frac{1}{(2\pi)^{k+m}} \int_{x\in I_{\lambda, v}}\int_{y\in J}
\Delta_{I_{\lambda, v}}(x) \Delta_J(y)e^{i(v, Py+y_0)}e^{-i(u,
x)}e^{-i(P^\ast v, y)} dx dy
\\
&=e^{i(v, y_0)}\widehat{\Delta_J}(0) \widehat{\Delta_{I_{\lambda,
v}}}(u),
\end{align*}
and hence it follows from Lemma~\ref{l1} that
$$
|(\Delta_{I_{\lambda, v}\times J} e^{i\lambda\varphi_{\lambda,
v}})^\wedge(u, P^\ast v)|\geq |\widehat{\Delta_J}(0)|4^{-k}
\bigg(\frac{\delta_\lambda}{2\pi}\bigg)^k
$$
for $u\in (-1/\delta_\lambda, 1/\delta_\lambda)^k$. Taking
\eqref{e10} into account, we complete the proof of the lemma.
\end{proof}

Let us proceed directly to the proof of the theorem. Without
loss of generality, we can assume that $\lambda>0$ is
sufficiently large. Recall that $J=(-l, l)^m$, where $0<l<\pi$.

Consider the expansion
$$
e^{i\lambda\varphi(t)}=\sum_{n\in\mathbb{Z}^{k+m}} c_\lambda(n)
e^{i(n, t)}, \qquad t\in\mathbb{T}^{k+m},
$$
and define measures $\mu_\lambda$ and $\sigma_\lambda$ by setting
$$
\mu_\lambda=\sum_{n\in\mathbb{Z}^{k+m}}c_\lambda(n)\delta_n, \qquad
\sigma_\lambda=\sum_{n\in\mathbb{Z}^{k+m}}|c_\lambda(n)|\delta_n,
$$
where $\delta_{t}$ stands for the unit mass concentrated at a
point $t\in\mathbb{R}^{k+m}$. Let $I_{\lambda, v}$ be the cube
whose existence has been established in Lemma~\ref{l2}. Since
the parallelepiped $I_{\lambda, v}\times J$ is obtained by a
shift of the parallelepiped $(-\delta_\lambda,
\delta_\lambda)^k\times (-l, l)^m$, we have
$$
|(\Delta_{I_{\lambda, v}\times
J})^\wedge|=(\Delta_{(-\delta_\lambda,
\delta_\lambda)^k\times (-l, l)^m})^\wedge.
$$
Therefore,
\begin{align*}
|(\Delta_{I_{\lambda, v}\times J}&e^{i\lambda\varphi})^\wedge(u,
P^\ast v)| =|(\Delta_{I_{\lambda, v}\times
J})^\wedge\ast\mu_\lambda(u, P^\ast v)|
\\
&\leq|(\Delta_{I_{\lambda, v}\times
J})^\wedge|\ast\sigma_\lambda(u, P^\ast v)=
(\Delta_{(-\delta_\lambda, \delta_\lambda)^k\times (-l,
l)^m})^\wedge\ast\sigma_\lambda(u, P^\ast v)
\end{align*}
for all $u\in\mathbb{R}^k$ and $v\in\mathbb{R}^m$. Thus, using
Lemma~\ref{l2}, we see that the estimate
$$
c\delta_\lambda^k\leq (\Delta_{(-\delta_\lambda,
\delta_\lambda)^k\times (-l, l)^m})^\wedge\ast\sigma_\lambda(u,
P^\ast v)
$$
holds for all
$$
(u, v)\in \bigg(-\frac{1}{\delta_\lambda},
\frac{1}{\delta_\lambda}\bigg)^k\times (\lambda W)_{\varepsilon_0}.
$$
Hence (see~\eqref{e6})
\begin{equation}\label{e11}
\begin{split}
c|(\lambda W)_{\varepsilon_0}|&\leq
c\delta_\lambda^k\bigg|\bigg(-\frac{1}{\delta_\lambda},
\frac{1}{\delta_\lambda}\bigg)^k \times(\lambda
W)_{\varepsilon_0}\bigg|
\\
&\leq\iint_{\big(-\frac{1}{\delta_\lambda},
\frac{1}{\delta_\lambda}\big)^k \times(\lambda
W)_{\varepsilon_0}}|(\Delta_{(-\delta_\lambda,
\delta_\lambda)^k\times (-l, l)^m})^\wedge\ast\sigma_\lambda(u,
P^\ast v)|du dv
\\
&\leq\iint_{\mathbb{R}^k\times\mathbb{R}^m}|(\Delta_{(-\delta_\lambda,
\delta_\lambda)^k\times (-l, l)^m})^\wedge\ast\sigma_\lambda(u,
P^\ast v)|du dv
\\
&=\frac{1}{|\det
P|}\iint_{\mathbb{R}^k\times\mathbb{R}^m}(\Delta_{(-\delta_\lambda,
\delta_\lambda)^k\times (-l, l)^m})^\wedge\ast\sigma_\lambda(u,
v)du dv
\\
&=\frac{1}{|\det P|}\sum_n |c_\lambda(n)|= \frac{1}{|\det
P|}\|e^{i\lambda\varphi}\|_{A(\mathbb{T}^{k+m})}.
\end{split}
\end{equation}
At the same time, it is well known (e.g., see~\cite[Secs.~5.4
and~5.6]{16}) that
$$
v_d N_F(2\varepsilon)\varepsilon^d\leq |(F)_\varepsilon|
$$
for an arbitrary bounded set $F\subseteq\mathbb{R}^d$,
where~$v_d$ is the volume of the unit ball in~$\mathbb{R}^d$.
Thus, since $\varepsilon_0\leq 1/2$ (see~\eqref{e7}), for the
left-hand side in~\eqref{e11} we obtain
$$
|(\lambda W)_{\varepsilon_0}|\gtrsim N_{\lambda
W}(2\varepsilon_0)\geq N_{\lambda W}(1)=N_W(1/\lambda).
$$
The proof of the theorem is complete.

\quad

\textsc{Remarks.} 1. Recall that, as we have verified in
Section~2, for each~$m$ there exists a space-filling curve of
class~$A$ in~$\mathbb{R}^m$. Actually some explicit examples of
space-filling curves of class~$A$ in~$\mathbb{R}^2$
and~$\mathbb{R}^3$ are well known. For example, the
corresponding mappings can be obtained as sums of lacunary
power series. The first results of this kind go back to Salem
and Zygmund. Their approach was later developed
in~\cite[Theorem~II]{8}, where it was shown that if $\sigma>1$
and $\inf_k n_{k+1}/n_k>1$, then the mapping
$$
a(t)=\sum_{k=1}^\infty \frac{1}{k^\sigma}e^{in_kt}
$$
is space-filling in the complex plane. Another example
in~$\mathbb{R}^2$ is the Shoenberg curve
(see~\cite[Chap.~7]{18}); it is easily seen that the Shoenberg
curve is of class~$A$, the same applies to the Shoenberg curve
in~$\mathbb{R}^3$. As to the other classical space-filling
curves that can be found in~\cite{18} the author does not know
if, being properly modified to obtain continuous mappings of
the circle, they are of class~$A$. In particular, consider the
classical Peano mapping $p \colon [0,
1]\rightarrow\mathbb{R}^2$. Define a mapping $\widetilde{p} :
\mathbb T\rightarrow\mathbb R^2$ by setting
$\widetilde{p}(t)=p(|t|/\pi)$ for $t\in [-\pi, \pi]$. Is it
true that $\widetilde{p}$ is of class~$A$?

2. It is well known that the covering number of a set is
closely related to its dimension (see, e.g., \cite[Chaps.
4--5]{16}). Assuming that Hausdorff dimension $\dim_H W$ or
Minkowski dimension $\dim_M W$ of a set $W$ equals $s_0$, we
have $N_W(\varepsilon)\gtrsim 1/\varepsilon^{s}$ for every $s$
with $s<s_0$. Thus, if $a : \mathbb T^k\rightarrow\mathbb R^m$
is a mapping of class $A$ such that the image $W=a(\mathbb
T^k)$ is of Hausdorff or Minkowski dimension $s_0$, then for
the corresponding phase function $\varphi$ the theorem of the
present work yields $\|e^{i\lambda\varphi}\|_{A(\mathbb
T^{k+m})}\gtrsim |\lambda|^{s}$ for every $s$ with $s<s_0$. The
fact that for each $s_0$ with $1\leq s_0\leq m$, there exists a
mapping $a \colon \mathbb{T}\rightarrow\mathbb{R}^m$ of
class~$A$ such that the image $W=a(\mathbb{T})$ has Hausdorff
dimension~$s_0$ readily follows from the well known theorem on
the range of the sum of a Gaussian trigonometric
series~\cite[Sec.~XIV.4, Th. 1]{5}. Namely, let $X_n$~and~$
Y_n$ be similar independent Gaussian random variables in
$\mathbb{R}^m$. Consider the Gaussian trigonometric series of
the form
$$
a(t)=\sum_{n=0}^\infty 2^{-n/s_0} (X_n\cos 2^nt+Y_n\sin 2^nt).
$$
We have $\dim_H a(\mathbb{T})=s_0$ almost surely. We also note
that explicit examples of planar curves of class $A$ of a given
dimension can be obtained by using the Weierstrass function
$$
w(t)=\sum_{n\geq 0}\frac{1}{2^{(2-s_0)n}}\cos 2^nt,
$$
where $1<s_0<2$. It was proved in~\cite{9} that the Minkowski
dimension of the graph of~$w$ on $[-\pi, \pi]$ is~$s_0$. By
setting $a(t)=(|t|, w(t))$ for $t\in [-\pi, \pi]$, we obtain a
mapping $a \colon \mathbb{T}\rightarrow\mathbb{R}^2$ of
class~$A$ such that $\dim_M a(\mathbb{T})=s_0$.

3. In general it is not clear what sets in $\mathbb R^m,
\,m\geq 2,$ can be obtained as images of mappings $a : \mathbb
T^k\rightarrow\mathbb R^m$ of class~$A$ (for $m>k$). Let $W$ be
a compact set in~$\mathbb{R}^m$. Assume that $W$ is a
continuous image of the circle~$\mathbb{T}$. Is it true that
$W$~is the image of~$\mathbb{T}$ under some mapping of
class~$A$?  The author does not know the answer even for $m=2$.

\quad

\textbf{Acknowledgements.} The article was prepared within the
framework of the Academic Fund Program at the National Research
University Higher School of Economics (HSE) in 2016--2017
(grant no.~16-01-0078) and supported within the framework of a
subsidy granted to the HSE by the Government of the Russian
Federation for the implementation of the Global Competitiveness
Program.

\end{document}